\documentclass[12pt]{article}

\usepackage{amssymb}


\oddsidemargin 10mm
\evensidemargin 10mm
\topmargin 6mm
\textheight = 40\baselineskip
\textwidth 145mm


\newcommand{\qed}{$\;\;\;\Box$}
\newenvironment{proof}{\par\smallbreak{\sl Proof.~}}
{\unskip\nobreak\hfill \qed \par\medbreak}

\newcounter{claim}
\renewcommand{\theclaim}{\arabic{claim}}
{\par\medskip\par}

{\qed\par\smallbreak}
\newcommand{\hide}[1]{}



\newcommand{\Con}{{\mbox{C}}}
\newcommand{\D}{{\cal D}}


\newcommand{\N}{\mathbb{N}}
\newcommand{\R}{\mathbb{R}}


\newcommand{\LL}{{\cal L}}
\newcommand{\beq}{\begin{equation}}
\newcommand{\ee}{\end{equation}}

\renewcommand{\d}{\partial}

\newtheorem{thm}{Theorem}
\newtheorem{lemma}[thm]{Lemma}

\newtheorem{rem}[thm]{Remark}

\newcommand{\al}{\alpha}
\newcommand{\be}{\beta}
\newcommand{\ga}{\gamma}

\newcommand{\vphi}{\varphi}


\newcommand{\im}{\mathop{\rm im}}
\newcommand{\ind}{\mathop{\rm ind}}

\newcommand{\codim}{\mathop{\rm codim}}
\newcommand{\const}{\mathop{\rm const}}
\setcounter{page}{1}

\title{
On the Fredholm Solvability for a Class of 
Multidimensional Hyperbolic Problems
} 

\newcounter{thesame}
\setcounter{thesame}{1}
\author{
I.~Kmit\thanks{supported by a Humboldt Research Fellowship} \\ 
{\small
Institute for Applied Problems of Mechanics and Mathematics,}\\
{\small Ukrainian Academy of Sciences}\\
{\small Naukova St.\ 3b,
79060 Lviv,
Ukraine}
\\
{\small   E-mail:
{\tt kmit@informatik.hu-berlin.de}}
}

\date{}

\begin{document}

\maketitle

\begin{abstract}
\noindent 
We prove the Fredholm alternative for a class of two-dimensional first-order hyperbolic systems
with periodic-Dirichlet boundary conditions. 
Our approach is based on a regularization via
a right parametrix. 
\end{abstract}

\emph{Key words:} multidimensional first-order hyperbolic systems, periodic problems, 
Fredholm alternative.

\emph{Mathematics Subject Classification: 35A17, 35B10, 35L50}

\section{Introduction}\label{sec:intr} 

The Fredholm property of linearizations plays the key role in local investigations of nonlinear differential 
equations via the Implicit Function Theorem and the Lyapunov-Schmidt reduction (see, e.g., \cite{ChowHale,Ki}).  
In contrast to ODEs and  parabolic PDEs almost nothing is known about the Fredholm property for 
hyperbolic PDEs.

The Fredholm solvability for ODEs and many classes of parabolic PDEs can be easily derived
from the basic fact that Fredholm operators are exactly compact 
perturbations of  bijective operators. The hyperbolic case is much more complicated. 
It is well known that the inverse of a first-order hyperbolic operator decreases the 
smoothness. By this reason the Fredholm analysis of hyperbolic problems requires establishing an optimal
regularity relation between  the spaces of solutions and right-hand sides of the differential 
equations and finding an appropriate regularization to compensate the loss-of-smoothness effect.

In~\cite{KmRe1,KmRe2}  we  presented a quite general approach to proving Fredholmness
for first-order one-dimensional hyperbolic PDEs. It is based on the construction of a right regularizer 
(right parametrix) and using a functional-analytic criterion for Fredholmness in Banach spaces. 
The techniques of~\cite{KmRe1,KmRe2}  cover the so-called traveling-wave models from laser dynamics.
In the present paper we extend this approach (applying completely different techniques) to
 a class of multidimensional hyperbolic PDEs admitting an integral representation. 
Though we currently do not know any real physical interpretation for the problems
in this class, our results are interesting from the theoretical point of view
because the multidimensional case is qualitatively different. We demonstrate
a noteworthy effect that a higher dimension requires more regularization for the 
inverse operator (see Remark~\ref{rem:reg}). Another interesting feature of the hyperbolic systems 
under consideration is that the ``lower order'' terms are those terms contributing
into the system transversely to characteristic directions (c.f. the definition of the 
operator $D$ in (\ref{eq:CD})).

Specifically, we investigate a linear first-order two-dimensional hyperbolic system of the kind
\beq\label{eq:10}
\begin{array}{rr}
\displaystyle
\sum\limits_{j=1}^na_{ij}\left(
\al_i\partial_tu_j  + \partial_xu_j + \be_i\partial_yu_j + \ga_i(x,y,t)u_j
\right) + \sum\limits_{j=1}^nb_{ij}(x,y,t)u_j  = f_i(x,y,t),\\
i\le n,\quad (x,y,t)\in(0,1)\times\R\times\R,
\end{array}
\ee
supplemented with the  periodic conditions in $y$ and $t$
\beq\label{eq:11}
\begin{array}{l}
u_i\left(x,y+Y,t+T\right) = u_i(x,y,t),\qquad
i\le n,\quad (x,y,t)\in[0,1]\times\R\times\R,
\end{array}
\ee
and the Dirichlet boundary conditions in $x$
\beq\label{eq:12}
\begin{array}{l}
u_i(0,y,t) = 0,\qquad
i\le k,\quad (y,t)\in\R^2,\\
u_j(1,y,t) = 0,\qquad
k+1\le j\le n,\quad (y,t)\in\R^2.
\end{array}
\ee
Here  the periods $Y>0$ and $T>0$  and the coefficients $a_{ij}$, 
 $\al_i\ne 0$, and $\be_i\ne 0$ are fixed real constants, the coefficients
$\ga_i$, $b_{ij}: [0,1]\times\R\times\R\to\R$ and the right hand sides $f_i: [0,1]\times\R\times\R\to\R$ 
are known functions. Without loss of generality, consider the case
$n\ge 3$ and $2\le k\le n-1$. Fix an arbitrary
 $l\in\N$ in the range $1\le l\le k-1$ and suppose that the matrix $A=\left(a_{ij}\right)_{i,j=1}^n$
has the following diagonal-block structure
 \begin{equation}\label{eq:blockA}
  A=\left(
  \begin{array}{cccccc}
    A_1&0 & 0\\
    0& A_2& 0\\
    0& 0 & A_3
  \end{array}
  \right),
 \end{equation}
where $A_1$, $A_2$, and $A_3$ are $l\times l$, $(k-l)\times(k-l)$, and $(n-k)\times(n-k)$-matrices,
respectively, while $0$ denotes the null matrices of respective sizes. Moreover, the matrix
$B=\left(b_{ij}\right)_{i,j=1}^n$ is assumed to be one of the following two kinds:
\begin{equation}\label{eq:blockB}
  B=\left(
  \begin{array}{cccccc}
    0&0 & B_1\\
    B_2& 0& 0\\
    0& B_3 & 0
  \end{array}
  \right)
 \end{equation}
or
\begin{equation}\label{eq:blockBm}
  B=\left(
  \begin{array}{cccccc}
    0&\tilde B_1 & 0\\
    0& 0& \tilde B_2\\
    \tilde B_3& 0 & 0
  \end{array}
  \right),
 \end{equation} 
where $B_1, B_2, B_3, \tilde B_1, \tilde B_2$, and $\tilde B_3$ are, respectively,
$l\times(n-k)$, $(k-l)\times l$, $(n-k)\times(k-l)$, $l\times(k-l)$, $(k-l)\times(n-k)$, and
$(n-k)\times l$-matrices. For definiteness, we suppose (\ref{eq:blockB})
(the case of (\ref{eq:blockBm}) is quite similar).

We investigate solvability of the problem (\ref{eq:10})--(\ref{eq:12}) 
and state our result as a Fredholm alternative. More precisely, we prove
 that the problem is solvable iff 
the right hand side is orthogonal to all solutions 
to the homogeneous adjoint system
\beq
\begin{array}{rr}
\displaystyle
\sum\limits_{j=1}^na_{ji}\left(
-\al_j\partial_tu_j  - \partial_xu_j - \be_j\partial_yu_j 
+ \ga_j(x,y,t)u_j\right)
 + \sum\limits_{j=1}^nb_{ji}(x,y,t)u_j  = 0,\nonumber\\
i\le n,\quad (x,y,t)\in(0,1)\times\R\times\R,\nonumber
\end{array}\nonumber
\ee
endowed with conditions (\ref{eq:11}) and (\ref{eq:12}).

We will work within the algebra $C_{Y,T}([0,1]\times\R^2)$ of continuous functions which are 
$Y$-periodic in $y$ and $T$-periodic in $t$.
Let 
\beq\label{eq:W}
W\equiv\left(C_{Y,T}\left([0,1]\times\R^2\right)\right)^n
\ee
denote the space of right-hand sides endowed with
the usual max-norm and let
\begin{equation}\label{eq:13}
\begin{array}{cc}
\displaystyle
V\equiv \Bigl\{u\in W \,:\,u_i(0,y,t)=0 \mbox{ for } i\le k, u_i(1,y,t)=0 \mbox{ for } k+1\le i\le n,\nonumber\\
\displaystyle
\left[\sum\limits_{j=1}^na_{ij}\left(\al_i\partial_tu_j+\partial_xu_j+\be_i\partial_yu_j\right)\right]_{j=1}^n\in W \mbox{ for } i\le n
\Bigr\}\nonumber
\end{array}
\end{equation}
denote the space of solutions. Here 
$u=(u_1,\dots,u_n)$ and $\d_tu_j$, $\d_xu_j$, and $\d_yu_j$ are generalized derivatives. The space $V$
is endowed with the norm
\begin{equation}\label{eq:14}
\begin{array}{cc}
\displaystyle
\|u\|_{V}\equiv\|u\|_{W} + \left\|\left[\sum\limits_{i=1}^na_{ij}\left(\al_i\partial_tu_j+\partial_xu_j+\be_i\partial_yu_j\right)\right]_{j=1}^n
\right\|_{W}.
\end{array}
\end{equation}
Note that the space $V$ depends on the coefficients of
system (\ref{eq:10}). Notice also the continuous embedding
$$
C_{Y,T}^1\left([0,1]\times\R^2\right)\hookrightarrow V\hookrightarrow 
C_{Y,T}\left([0,1]\times\R^2\right).
$$

To state our result, let us introduce linear
operators
$C\in\LL(V;W)$ and $D\in\LL(W)$ by
\begin{equation}\label{eq:CD}
\begin{array}{cc}
\displaystyle
Cu
\equiv
\left[\sum\limits_{j=1}^na_{ij}\left(
\al_i\partial_tu_j+\partial_xu_j+\be_i\partial_yu_j+\ga_i(x,y,t)u_j\right)\right]_{i=1}^n,
\\
\displaystyle
Du
\equiv
\left[\sum\limits_{j=1}^n
b_{ij}(x,y,t)u_j\right]_{i=1}^n.
\end{array}
\end{equation}
The problem (\ref{eq:10})--(\ref{eq:12}) can now be written as 
$$
Cu+Du=f.
$$
In what follows, we also use notation
$$
\tilde\al_i=
\cases{0
&if
$b_{ij}\equiv 0$ for all $j\le n$,
 \cr \al_i
&otherwise
, \cr}
$$
$$
\tilde\be_i=
\cases{0
&if
$b_{ij}\equiv 0$ for all $j\le n$,
 \cr \be_i
&otherwise
. \cr}
$$

\begin{thm}\label{thm:fredh}
Suppose that problem (\ref{eq:10})--(\ref{eq:12}) satisfies the following assumptions:
\begin{equation}\label{eq:reg}
\ga_i\in L^\infty\left((0,1),C_{Y,T}^1\left(\R^2\right)\right),\quad
b_{ij}\in C^1_{Y,T}\left([0,1]\times\R^2\right),
\end{equation}
\begin{equation}\label{eq:det}
\det(a_{ij})_{i,j=1}^n\ne 0,
\end{equation}
 and
\begin{equation}\label{eq:ab}
(\tilde\be_i-\tilde\be_j)(\tilde\al_j-\tilde\al_s)-
(\tilde\be_j-\tilde\be_s)(\tilde\al_i-\tilde\al_j)\ne 0
\end{equation}
for all $i,j,s\in\{1,\dots,n\}$ with $1\le i\le l$, $l+1\le j\le k$, $k+1\le s\le n$
unless $\tilde\al_i=\tilde\al_j=0$.
Let $W$ and $V$ be function spaces defined by (\ref{eq:W}), (\ref{eq:13}), and
(\ref{eq:14}). Let $C\in\LL(V;W)$ and $D\in\LL(W)$ be linear operators defined by 
 (\ref{eq:CD}).
Then the following is true:

(i) The operator $C$  is an isomorphism from $V$ onto $W$.

(ii) The operator $C+D$ is a Fredholm operator from $V$ into $W$ with index zero.
\end{thm}
Part  $(i)$ of the theorem is a necessary starting point
of the Fredholm analysis. It shows  that the couple of spaces $(V,W)$ provides the desired optimal regularity 
relation between the solutions and the right-hand sides of the equations. 

\begin{rem}
Since the set of Fredholm operators is open,  the  conclusion of Theorem \ref{thm:fredh}  survives under small 
 perturbations in $L^\infty\left((0,1),C_{Y,T}^1\left(\R^2\right)\right)$ and in $C_{Y,T}^1\left([0,1]\times\R^2\right)$ of
the coefficients $\ga_i$ and $b_{jk}$, respectively. Such perturbations can modify the structure of matrix (\ref{eq:blockB}).
Thus,   the structure of (\ref{eq:blockB}) is not a necessary condition for the conclusion of the theorem
(though it is essential for our proof).
\end{rem}

In Section~\ref{sec:criter} we  prove a criterion of Fredholmness for linear operators in Banach spaces,
which is useful, in particular, for hyperbolic PDEs. Section~\ref{sec:spaces} is devoted to the desired properties of the
solution spaces. Our main result, Theorem~\ref{thm:fredh}, is proved in Section~\ref{sec:fredh}.

\section{Fredholmness criterion}\label{sec:criter}

Here we prove the following constructive Fredholmness criterion:

\begin{thm}\label{thm:criter}
Let $W$ be a Banach space, $I$ be the identity in $W$, and $K\in\LL(W)$ with $K^n$ being
compact for some $n\in\N$.  Then $I-K$ is a Fredholm operator of index zero.
\end{thm}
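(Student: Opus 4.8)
The plan is to reduce the statement to the classical Riesz--Schauder fact that $I$ minus a compact operator is Fredholm, by exploiting the algebraic factorization of $I-K^n$. Note that $K$ itself need not be compact, so the classical result cannot be applied to $I-K$ directly; the key observation is that, modulo compact operators, $K$ behaves like a nilpotent element.

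First I would introduce $P:=\sum_{j=0}^{n-1}K^j\in\LL(W)$ and record the identity
\[
(I-K)P = P(I-K) = I - K^n .
\]
Since $K^n$ is compact, the classical theory guarantees that $I-K^n$ is Fredholm, and this exhibits $P$ as a two-sided parametrix for $I-K$ modulo a compact operator: writing $C:=K^n$, we have $(I-K)P = P(I-K) = I - C$ with $C$ compact.

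The cleanest way to conclude is via Atkinson's theorem. Let $\mathcal{K}(W)$ denote the ideal of compact operators and $\pi\colon\LL(W)\to\LL(W)/\mathcal{K}(W)$ the quotient onto the Calkin algebra; recall that $T\in\LL(W)$ is Fredholm iff $\pi(T)$ is invertible. From $K^n\in\mathcal{K}(W)$ we get $\pi(K)^n=\pi(K^n)=0$, so $\pi(K)$ is nilpotent, and hence
\[
\pi(I-K)\,\pi(P) = \pi(P)\,\pi(I-K) = \pi(I-C) = \pi(I),
\]
the unit of the Calkin algebra. Thus $\pi(I-K)$ is invertible, and therefore $I-K$ is Fredholm.

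If one prefers to avoid Atkinson and argue by hand, the identity above already shows that $\ker(I-K)\subseteq\ker(I-C)$ is finite-dimensional and that $\im(I-C)\subseteq\im(I-K)$ forces $\im(I-K)$ to have finite codimension. The step I expect to be the genuine obstacle is then proving that $\im(I-K)$ is closed: finite codimension of the range does not by itself yield closedness for a general bounded operator, and one must feed in the parametrix relation $P(I-K)=I-C$ together with the compactness of $C$ to produce a topological complement and invoke the open mapping theorem. The Calkin-algebra route packages exactly this argument, so that is the one I would write out.
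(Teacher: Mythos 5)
Your proof is correct, but it takes a genuinely different route from the paper's. Both arguments start from the same factorization $I-K^n=\bigl(\sum_{j=0}^{n-1}K^j\bigr)(I-K)=(I-K)\sum_{j=0}^{n-1}K^j$, but you then pass to the Calkin algebra and invoke Atkinson's theorem: $\pi(K)$ is nilpotent, so $\pi(I-K)$ is invertible with inverse $\pi\bigl(\sum_{j=0}^{n-1}K^j\bigr)$, hence $I-K$ is Fredholm. The paper instead argues by hand: it reads off $\dim\ker(I-K)\le\dim\ker(I-K^n)<\infty$ and the analogous bound for the adjoint directly from the factorization, and then proves closedness of $\im(I-K)$ by a sequential argument --- it decomposes $W=\ker(I-K)\oplus V$, shows by contradiction (normalizing and using compactness of $K^n$) that the $V$-components of a sequence with $(I-K)w_j\to w$ stay bounded, and then extracts a convergent subsequence of $K^nv_{j_k}$ to exhibit an explicit preimage of $w$. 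Your version is shorter and cleaner at the cost of importing Atkinson's theorem, whose proof essentially contains the closed-range argument the paper writes out; the paper's version is self-contained modulo the Riesz--Schauder theory for $I-K^n$. One small inaccuracy in your closing remark: for a bounded operator $T$ between Banach spaces, finite \emph{algebraic} codimension of the range does in fact force the range to be closed (map $(X/\ker T)\times\R^m$ onto $Y$ by $([x],a)\mapsto Tx+\sum a_iy_i$ and apply the open mapping theorem), so the ``by hand'' route you sketch closes more easily than you suggest; since you conclude via the Calkin algebra anyway, this does not affect the validity of your proof.
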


\begin{proof}
Since 
$$
I-K^n=(I-K)\sum\limits_{i=0}^{n-1}K^i,
$$
the sum $\sum_{i=0}^{n-1}K^i$ is a parametrix for the operator  $I-K\in\LL(W)$. Therefore,
the Fredholmness  of $I-K$ follows, i.e.  from \cite[Proposition 5.7.1]{Zeidler}
or \cite[Theorem 5.5]{Schechter}. Nevertheless, for the reader's convenience here
we give an independent, simple, and self-contained proof (of this fact). Our proof extends the argument that was used 
 in~\cite{KmRe1} in the case $n=2$. Note first that 
\beq
\label{eq:endl}
\dim\ker(I-K)\le \dim\ker(I-K^n)<\infty.
\ee
Similarly $\dim\ker(I-K)^*<\infty$, hence
$
\codim\overline{\im(I-K)}<\infty.
$
It remains to show that $\im(I-K)$ is closed.

Take a sequence $(w_j)\subset W$ and an element $w\in W$ such that
\begin{equation}\label{eq:4.1}
(I-K)w_j\to w.
\end{equation}
We have to show that $w \in \im(I-K)$.

By (\ref{eq:endl}) there exists a closed subspace $V$ of $W$ such that 
\beq
\label{eq:dirSum} 
W=\ker(I-K)\oplus V,
\ee
Consider the decomposition
$$
w_{j}=u_j+v_j, \mbox{ where } u_j\in\ker(I-K) \mbox{ and } v_j \in V.
$$
From (\ref{eq:4.1}) we infer that
\begin{equation}\label{eq:v}
(I-K)v_j\to w.
\end{equation}

Let us show that  the sequence $(v_j)$ is bounded. Suppose this is not true.
Without loss of generality we can assume that
\begin{equation}\label{eq:4.11}
\lim\limits_{j\to \infty}\|v_j\|=\infty.
\end{equation}
From (\ref{eq:v}) and (\ref{eq:4.11}) we get 
\beq
\label{eq:1Null}
(I-K)\frac{v_j}{\|v_j\|} \to 0,
\ee
hence
\beq
\label{eq:Null}
(I-K^n)\frac{v_j}{\|v_j\|}  \to 0.
\ee
Since $K^n$ is compact, there exist $v \in W$ and a subsequence 
$(v_{j_k})_{k \in \N}$ such that
\beq
\label{eq:conv}
K^n\frac{v_{j_k}}{\|v_{j_k}\|}  \to v.
\ee
The convergences (\ref{eq:conv}) and (\ref{eq:Null}) immediately imply that 
\beq
\label{eq:minus}
\frac{v_{j_k}}{\|v_{j_k}\|} \to v \in V.
\ee
Combining (\ref{eq:minus}) with (\ref{eq:1Null}), we get $(I-K)v=0$,
i.e. $v \in V \cap \ker (I-K)$ and $\|v\|=1$. This contradicts (\ref{eq:dirSum})
and proves  the boundedness of $(v_j)$.

Now we show that $w\in\im(I-K)$.
As
$K^n$ is compact, there exists $v\in W$ and a subsequence $(v_{j_k})$ such that
$
K^nv_{j_k}\to v
$
as $k\to\infty$.
By (\ref{eq:v}) we also have 
$$
(I-K^n)v_j=\sum\limits_{i=0}^{n-1}K^i(I-K)v_j\to  
\sum\limits_{i=0}^{n-1}K^iw.
$$ 
Therefore,
$$
\lim\limits_{k\to \infty}v_{j_k}=\sum\limits_{i=0}^{n-1}K^iw+v
$$
and
$$
w=\lim\limits_{k\to \infty}(I-K)v_{j_k}=(I-K)\left(\sum\limits_{i=0}^{n-1}K^iw+v\right)\in\im(I-K)
$$
as desired. The Fredholm property is thereby proved.

To prove that $I-K$ has index zero, we additionally use a homotopy argument.
Let us consider the continuous function
$$
s\in\R\mapsto I-sK\in\LL(W).
$$
Since $K^n\in\LL(W)$ is a compact operator,
the operators $\left(sK\right)^n\in\LL(W)$ are compact for each $s\in\R$ and, as we just proved,
the operators $I-sK$ are Fredholm. 
By \cite[Proposition 5.8.1]{Zeidler}, $\ind(I-sK)=\const$ for all $s\in\R$.
It remains to note that the identity operator $I$ has index zero.
\end{proof}

\section{More about solution spaces}\label{sec:spaces}

We now prove that $V$ is a Banach space.

\begin{lemma}\label{lem:2.2}
The space $V$ is complete.
\end{lemma}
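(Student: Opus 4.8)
The plan is to recognize $V$ as the domain of a generalized differential operator equipped with its graph norm, and then reduce completeness to the closedness of that operator. Write $(\D u)_i:=\sum_{j=1}^na_{ij}\left(\al_i\partial_tu_j+\partial_xu_j+\be_i\partial_yu_j\right)$ for $i\le n$, so that by (\ref{eq:13}) and (\ref{eq:14}) we have $V=\{u\in W:u\text{ satisfies }(\ref{eq:12})\text{ and }\D u\in W\}$ and $\|u\|_V=\|u\|_W+\|\D u\|_W$. Here the derivatives must be read in the distributional sense: a function $u\in W$ is merely continuous, and only the particular characteristic combinations $(\D u)_i$ are required to exist and to lie in $W$. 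The torus structure (periodicity in $y$ and $t$, interval $[0,1]$ in $x$) lets us test against smooth functions that are periodic in $(y,t)$ and compactly supported in $x$ on $(0,1)$, so that $\D$ is well defined as a map from its domain in $W$ into $W$.

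First I would take a Cauchy sequence $(u^{(m)})_{m\in\N}$ in $V$. By the definition of $\|\cdot\|_V$, both $(u^{(m)})$ and $(\D u^{(m)})$ are Cauchy in $W$. Since $W$ is a space of continuous periodic functions with the maximum norm, it is complete, so there exist $u,g\in W$ with $u^{(m)}\to u$ and $\D u^{(m)}\to g$, both uniformly. The crucial step is then to show that $\D$ is closed, i.e.\ that $\D u=g$. This follows because uniform convergence implies convergence in the distributional sense: for every admissible test function $\vphi$, integration by parts moves $\D$ onto $\vphi$, and $\int u^{(m)}\,\D^{*}\vphi\to\int u\,\D^{*}\vphi$ by uniform convergence on the compact fundamental domain, while $\int(\D u^{(m)})\vphi\to\int g\,\vphi$ for the same reason. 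Hence $\D u$ and $g$ define the same distribution; as $g\in W$ is continuous, the defining condition $\D u\in W$ holds and $\D u=g$.

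It remains to check that $u$ inherits the boundary conditions (\ref{eq:12}) and to assemble the conclusion. Since $u^{(m)}\to u$ uniformly, the traces converge uniformly as well, so $u^{(m)}_i(0,\cdot,\cdot)=0$ for $i\le k$ and $u^{(m)}_j(1,\cdot,\cdot)=0$ for $k+1\le j\le n$ pass to the limit, giving $u\in V$. Finally $\|u^{(m)}-u\|_V=\|u^{(m)}-u\|_W+\|\D u^{(m)}-g\|_W\to0$, so the Cauchy sequence converges in $V$, proving completeness. The main obstacle is the closedness argument of the second paragraph: one must handle the fact that the individual partial derivatives of $u_j$ need not exist classically, so the passage from uniform convergence to distributional convergence and the uniqueness of the distributional limit have to be carried out for the coupled characteristic combinations $\D$ alone, with the periodic-in-$(y,t)$, compactly-supported-in-$x$ class of test functions chosen so that no boundary terms obstruct the integration by parts.
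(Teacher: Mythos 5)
Your proposal is correct and follows essentially the same route as the paper: identify the Cauchy limits $u$ and $g$ in $W$, then verify $\D u = g$ distributionally by integrating by parts against test functions and passing to the limit under uniform convergence. The only (welcome) addition is your explicit check that the boundary conditions (\ref{eq:12}) survive the uniform limit, a point the paper leaves implicit.
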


\begin{proof}
Let $(u^{m})_{m\in\N}$ be a fundamental sequence
in $V$.
Then 
$$
(u^{m})_{m\in\N} \quad\mbox{ and }\quad
\left(\left[\sum\limits_{j=1}^na_{ij}(\al_i\d_tu_j^{m}+\d_xu_j^{m}+\be_i\d_yu_j^{m})\right]_{i=1}^n\right)_{m\in\N}
$$
are fundamental sequences in $W$. Due to the completeness of $W$,
there exist
$u,v\in W$  such that
$$
u^{m}\to u \; \mbox{ and } \;
\left[\sum\limits_{j=1}^na_{ij}(\al_i\d_tu^{m}+\d_xu^{m}+\be_i\d_yu^{m})\right]_{i=1}^n\to  v
\mbox{ in } W \mbox{ as } m\to\infty.
$$
It remains to show that
$
\left[\sum_{j=1}^na_{ij}(\al_i\d_tu_j+\d_xu_j+\be_i\d_yu_j)\right]_{i=1}^n=v
$
in the sense of generalized derivatives. Let $\langle\cdot,\cdot\rangle : \D^*\times\D\to\R$  denote
the dual pairing. Then for all
$\vphi_1,\dots,\vphi_n \in\D\left((0,1)\times\left(0,Y\right)\times(0,T)\right)$ we have
\begin{eqnarray*}
&\displaystyle
\sum\limits_{i=1}^n\left\langle\sum\limits_{j=1}^na_{ij}(\al_i\d_t+\d_x+\be_i\d_y)u_j,\vphi_i\right\rangle&\\
&\displaystyle
=
-\sum\limits_{i=1}^n\sum\limits_{j=1}^na_{ij}\left\langle u_j,(\al_i\d_t+\d_x+\be_i\d_y)\vphi_i\right\rangle
&\\
&\displaystyle
=
-\sum\limits_{i=1}^n
\sum\limits_{j=1}^na_{ij}\lim\limits_{m\to\infty}\left\langle u_j^{m},(\al_i\d_t+\d_x+\be_i\d_y)\vphi_i\right\rangle&\\
&\displaystyle
=\sum\limits_{i=1}^n\lim\limits_{m\to\infty}
\left\langle \sum\limits_{j=1}^n a_{ij}(\al_i\d_t+\d_x+\be_i\d_y)u_j^{m},\vphi_i\right\rangle=\sum\limits_{i=1}^n
\left\langle v_i,\vphi_i\right\rangle
&
\end{eqnarray*}
as desired. 
\end{proof}

\section{Fredholm alternative 
(proof of Theorem~\ref{thm:fredh})}\label{sec:fredh}

To prove part $(i)$ of the theorem, it is sufficient to show that, given $f\in W$, there exists a unique $u\in V$ satisfying the system
\beq\label{eq:15}
\sum\limits_{j=1}^na_{ij}\left(
\al_i\partial_tu_j  + \partial_xu_j + \be_i\partial_yu_j + \ga_i(x,y,t)u_j
\right) = f_i(x,y,t),\quad i\le n,
\ee
and the apriori estimate
\beq\label{eq:apr}
\|u\|_{V} \le C \|f\|_{W}
\ee
with a constant $C$ independent of $f$ and $u$. 
Rewrite (\ref{eq:15}) as
$$
\left(\frac{d}{d\xi}+\ga_i\right)\left[\sum\limits_{j=1}^na_{ij}u_j\left(\xi,
y+\be_i(\xi-x),t+\al_i(\xi-x)\right)\right]\bigg|_{\xi=x} = f_i(x,y,t), \quad i\le n.
$$
Set
$$
  A_0=\left(
  \begin{array}{cccccc}
    A_1&0 \\
    0& A_2
  \end{array}
  \right).
$$
Taking into account the structure of matrix $A$ (assumptions (\ref{eq:blockA})) and the non-degenerateness
of $A$ (assumption (\ref{eq:det})),  system (\ref{eq:15}) 
has a unique solution in $V$ explicitely given by the formula
\begin{eqnarray}
\label{eq:16}
\lefteqn{
u_i(x,y,t) = \frac{1}{\det A_0}\sum\limits_{j=1}^k\left(A_{ji}^0\right)^{ad}}\nonumber\\
\displaystyle
&&
\times
\int\limits_0^xE_j(\xi;x,y,t)
f_j(\xi,y+\be_j(\xi-x),t+\al_j(\xi-x))\,d\xi,\quad i\le k;\nonumber\\
\lefteqn{
u_i(x,y,t) = \frac{1}{\det A_3}\sum\limits_{j=k+1}^n\left(A_{ji}^3\right)^{ad}}\\
\displaystyle
&&
\times
\int\limits_x^1E_j(\xi;x,y,t)
f_j(\xi,y+\be_j(\xi-x),t+\al_j(\xi-x))\,d\xi,\quad k+1\le i\le n,\nonumber
\end{eqnarray}
where $\left\{(A_{ij}^s)^{ad}\right\}_{i,j}$ stands for the adjoint matrix to $A_s$ and
\beq\label{eq:EG}
\begin{array}{cc}
\displaystyle E_i(\xi;x,y,t)\equiv\exp\left\{\int_x^\xi
\Gamma_i(\xi_0;x,y,t)\,d\xi_0\right\},\\
\Gamma_i(\xi_0;x,y,t)\equiv \ga_i(\xi_0,y+\be_i(\xi_0-x),t+\al_i(\xi_0-x)).
\end{array}
\ee

It remains to prove (\ref{eq:apr}). As a straightforward consequence of
(\ref{eq:16}), we have
\begin{equation}\label{eq:apr_0}
\|u\|_{W} \le C \|f\|_{W},
\end{equation}
where the constant $C$ does not depend on $f$ and $u$. 
To derive  (\ref{eq:apr}) from (\ref{eq:apr_0}), it suffices to show that because
 $u$ defined by (\ref{eq:16}) satisfies (\ref{eq:10})
in a distributional sense, then
$\sum_{j=1}^na_{ij}\left(
\al_i\partial_tu_j  + \partial_xu_j + \be_i\partial_yu_j\right) =- \sum_{j=1}^na_{ij}\ga_i(x,y,t)u_j
  + f_i(x,y,t)$  is a known continuous function for each  $i\le n$.
To this end, notice that the function $\Gamma_i(\xi;x,y,t)$ satisfies the equation 
$\left(\al_i\partial_t  + \partial_x + \be_i\partial_y\right)\Gamma_i=0$ for a.a. $\xi\in(0,1)$.
Now, fix $i\le k$ (the case $k+1\le i\le n$ is similar). Take 
$\vphi\in\D\left((0,1)\times\left(0,Y\right)\times(0,T)\right)$ and choose
a $\Con^1\left([0,1]\times[0,Y]\times[0,T]\right)$-sequence $f_i^m\to f_i$ in $\Con\left([0,1]\times[0,Y]\times[0,T]\right)$ 
as $m\to\infty$. We have
\begin{eqnarray*}
\lefteqn{
\left\langle
\sum\limits_{j=1}^na_{ij}\left(
\al_i\partial_t  + \partial_x + \be_i\partial_y\right)u_j,\vphi
\right\rangle
=-
\left\langle
\sum\limits_{j=1}^na_{ij}u_j,\left(\al_i\partial_t  + \partial_x + \be_i\partial_y\right)\vphi
\right\rangle}
\\
\displaystyle
&&
=-
\left\langle
\int_0^xE_i(\xi;x,y,t)f_i(\xi,y+\be_i(\xi-x),t+\al_i(\xi-x))\,d\xi,
\left(\al_i\partial_t  + \partial_x + \be_i\partial_y\right)\vphi
\right\rangle
\\
\displaystyle
&&
=-
\lim\limits_{m\to\infty}\left\langle
\int_0^xE_i(\xi;x,y,t)f_i^m(\xi,y+\be_i(\xi-x),t+\al_i(\xi-x))\,d\xi,
\left(\al_i\partial_t  + \partial_x + \be_i\partial_y\right)\vphi
\right\rangle
\\
\displaystyle
&&
=\lim\limits_{m\to\infty}\langle
\int_0^x\int_x^\xi\left(\al_i\partial_t  + \partial_x + \be_i\partial_y\right)\Gamma_i(\xi_0;x,y,t)\,d\xi_0
\\
\displaystyle
&&\times
E_i(\xi;x,y,t)f_i^m(\xi,y+\be_i(\xi-x),t+\al_i(\xi-x))\,d\xi,\vphi
\rangle
\\
\displaystyle
&&
+\lim\limits_{m\to\infty}\left\langle
\int_0^xE_i(\xi;x,y,t)\left(\al_i\partial_t  + \partial_x + \be_i\partial_y\right)
f_i^m(\xi,y+\be_i(\xi-x),t+\al_i(\xi-x)),\vphi
\right\rangle 
\\
\displaystyle
&&
-\left\langle
\ga_i(x,y,t)\int_0^xE_i(\xi;x,y,t)f_i(\xi,y+\be_i(\xi-x),t+\al_i(\xi-x)),\vphi
\right\rangle + \left\langle f_i,\vphi\right\rangle 
\\
\displaystyle
&&
=-\left\langle\sum\limits_{j=1}^na_{ij}\ga_i(x,y,t)u_j,\vphi\right\rangle + \left\langle f_i,\vphi\right\rangle,
\end{eqnarray*}
where  the last equality holds by (\ref{eq:16}).
 The isomorphism property $(i)$ is thereby proved.

To prove part $(ii)$ of the theorem, note that $C+D\in\LL(V,W)$ is Fredholm  iff $I+DC^{-1}\in\LL(W)$ is Fredholm,
 where $I$ is the identity in $W$. The proof will be finished by setting $K=-DC^{-1}$ and applying Theorem~\ref{thm:criter}
with $n=3$. We only need to show that $K^3$ is compact. 

Take a bounded set $N\subset W$ and let $M$ be its image under $K^3$. To show that $M$ is precompact in 
$W$, we use Arzela-Ascoli precompactness criterion in $\Con\left([0,1]\times[0,Y]\times[0,T]\right)$.
As $K^3$ is a bounded operator on $W$, the set $M$ is uniformly bounded in $W$. It remains to
check the equicontinuity property of $M$ in $W$. Given $\overline u\in W$, set 
$\tilde u\equiv DC^{-1}D\overline u$. Using the representation (\ref{eq:15}) for $C^{-1}$, we get
the following equalities: if $i\le l$, then
\begin{eqnarray}
\lefteqn{
\tilde u_i(x,y,t) = 
\sum\limits_{j=k+1}^nb_{ij}(x,y,t)\frac{1}{\det A_3}
\sum\limits_{r=k+1}^n\left(A_{rj}^3\right)^{ad}}\nonumber\\
\displaystyle
&&\times
\int\limits_{x}^1E_r(\xi;x,y,t)
\sum\limits_{q=l+1}^k(b_{rq}\overline u_q)(\xi,y+\be_r(\xi-x),t+\al_r(\xi-x))\,d\xi;
\nonumber
\end{eqnarray}
if $l+1\le i\le k$, then
\begin{eqnarray}
\displaystyle
\lefteqn{
\tilde u_i(x,y,t) = 
\sum\limits_{j=1}^lb_{ij}(x,y,t)\frac{1}{\det A_1}
\sum\limits_{r=1}^l\left(A_{rj}^1\right)^{ad}}\nonumber\\
\displaystyle
&&\times
\int\limits_0^xE_r(\xi;x,y,t)
\sum\limits_{q=k+1}^n(b_{rq}\overline u_q)(\xi,y+\be_r(\xi-x),t+\al_r(\xi-x))\,d\xi;\nonumber
\end{eqnarray}
if $k+1\le i\le n$, then
\begin{eqnarray}
\displaystyle
\lefteqn{
\tilde u_i(x,y,t) = 
\sum\limits_{j=l+1}^kb_{ij}(x,y,t)\frac{1}{\det A_2}
\sum\limits_{r=l+1}^k\left(A_{rj}^2\right)^{ad}}\nonumber\\
\displaystyle
&&\times
\int\limits_0^xE_r(\xi;x,y,t)
\sum\limits_{q=1}^l(b_{rq}\overline u_q)(\xi,y+\be_r(\xi-x),t+\al_r(\xi-x))\,d\xi.
\nonumber
\end{eqnarray}
Now, given $f\in W$, let $\overline u=C^{-1}DC^{-1}f$. Note that for 
 $\tilde u$ defined by the formulas above
 we have $\tilde u=\left(DC^{-1}\right)^3f$. Furthermore, $\overline u$ is explicitely given by: if $i\le l$, then
\begin{eqnarray*}
\lefteqn{
\overline u_i(x,y,t) = 
\frac{1}{\det A_1}
\sum\limits_{j=1}^l\left(A_{ji}^1\right)^{ad}\int\limits_{0}^xE_j(\xi;x,y,t)}
\\
\displaystyle
&&\times
\sum\limits_{r=k+1}^nb_{jr}(\xi,y+\be_j(\xi-x),t+\al_j(\xi-x))\,d\xi
\frac{1}{\det A_3}
\sum\limits_{q=k+1}^n\left(A_{qr}^3\right)^{ad}\\
\displaystyle
&&\times
\int\limits_{\xi}^1E_q(\xi_1;\xi,y+\be_j(\xi-x),t+\al_j(\xi-x))
\\
\displaystyle
&&\times
f_q(\xi_1,y+\be_j(\xi-x)+\be_q(\xi_1-\xi),t+\al_j(\xi-x)+\al_q(\xi_1-\xi))\,d\xi_1;
\end{eqnarray*}
 if $l+1\le i\le k$, then
\begin{eqnarray*}
\lefteqn{
\overline u_i(x,y,t) = 
\frac{1}{\det A_2}
\sum\limits_{j=l+1}^k\left(A_{ji}^2\right)^{ad}
\int\limits_{0}^xE_j(\xi;x,y,t)}\\
\displaystyle
&&\times
\sum\limits_{r=1}^lb_{jr}(\xi,y+\be_j(\xi-x),t+\al_j(\xi-x))\,d\xi
\frac{1}{\det A_1}
\sum\limits_{q=1}^l\left(A_{qr}^1\right)^{ad}\\
\displaystyle
&&\times
\int\limits_{0}^\xi E_q(\xi_1;\xi,y+\be_j(\xi-x),t+\al_j(\xi-x))
\\
\displaystyle
&&\times
f_q(\xi_1,y+\be_j(\xi-x)+\be_q(\xi_1-\xi),t+\al_j(\xi-x)+\al_q(\xi_1-\xi))\,d\xi_1;
\end{eqnarray*}
 if $k+1\le i\le n$, then
\begin{eqnarray*}
\lefteqn{
\overline u_i(x,y,t) = 
\frac{1}{\det A_3}
\sum\limits_{j=k+1}^n\left(A_{ji}^3\right)^{ad}
\int\limits_{x}^1E_j(\xi;x,y,t)}
\\
\displaystyle
&&\times
\sum\limits_{r=l+1}^kb_{jr}(\xi,y+\be_j(\xi-x),t+\al_j(\xi-x))\,d\xi
\frac{1}{\det A_2}
\sum\limits_{q=l+1}^k\left(A_{qr}^2\right)^{ad}\\
\displaystyle
&&\times
\int\limits_{0}^\xi E_q(\xi_1;\xi,y+\be_j(\xi-x),t+\al_j(\xi-x))
\\
\displaystyle
&&\times
f_q(\xi_1,y+\be_j(\xi-x)+\be_q(\xi_1-\xi),t+\al_j(\xi-x)+\al_q(\xi_1-\xi))\,d\xi_1.
\end{eqnarray*}
 To prove the desired equicontinuity property, we have
to show the existence of a function $\al: \R_+\to\R$ such that $\al(p)\to 0$ as $p\to 0$ and
\begin{equation}\label{eq:18}
\left\|\tilde u(x+h_1,y+h_2,t+h_3)-\tilde u(x,y,t)\right\|\le\al\left(|h|\right)
\end{equation}
uniformly in $\tilde u\in M$ and $h=(h_1,h_2,h_3)\in\R^3$. To achieve (\ref{eq:18}) 
we transform the expression for $\tilde u$ to a convenient form. We make
calculations only for one summand contributing into $\tilde u$ 
(similar argument works for all other summands as well), namely,
\begin{eqnarray}
\label{eq:*}
\lefteqn{
b_{ij}(x,y,t)\int\limits_{0}^xE_r(\xi;x,y,t)
 b_{rq}(\xi,y+\be_r(\xi-x),t+\al_r(\xi-x))\,d\xi}\nonumber\\
\displaystyle
&&\times\int\limits_{\xi}^1E_p(\xi_1;\xi,y+\be_r(\xi-x),t+\al_r(\xi-x))
\nonumber\\
\displaystyle
&&\times  b_{ps}(\xi_1,y+\be_r(\xi-x)+\be_p(\xi_1-\xi),t+\al_r(\xi-x)+\al_p(\xi_1-\xi))\,d\xi_1\\
\displaystyle
&&\times\int\limits_{0}^{\xi_1}E_m(\xi_2;\xi_1,y+\be_r(\xi-x)+\be_p(\xi_1-\xi),t+\al_r(\xi-x)+\al_p(\xi_1-\xi))
\nonumber\\
\displaystyle
&&\times  f_m(\xi_2,y+\be_r(\xi-x)+\be_p(\xi_1-\xi)+\be_m(\xi_2-\xi_1),\nonumber\\
\displaystyle
&&t+\al_r(\xi-x)+\al_p(\xi_1-\xi)+\al_m(\xi_2-\xi_1))\,d\xi_2
\nonumber
\end{eqnarray}
(note that this term is considered up to a multiplicative  constant).
Changing the order of integration, we have
\begin{eqnarray}
\label{eq:19}
\lefteqn{
\int\limits_{0}^xd\xi\int\limits_{\xi}^1d\xi_1\int\limits_{0}^{\xi_1}d\xi_2 =
\int\limits_{0}^xd\xi\left[\int\limits_{0}^{\xi}d\xi_2\int\limits_{\xi}^{1}d\xi_1 +
\int\limits_{\xi}^1d\xi_2\int\limits_{\xi_2}^{1}d\xi_1\right]}\nonumber\\
\displaystyle
&&
=\int\limits_{0}^xd\xi_2\int\limits_{\xi_2}^xd\xi\int\limits_{\xi}^1d\xi_1 +
\int\limits_{0}^xd\xi_2\int\limits_{0}^{\xi_2}d\xi\int\limits_{\xi_2}^1d\xi_1 +
\int\limits_{x}^1d\xi_2\int\limits_{0}^xd\xi\int\limits_{\xi}^1d\xi_1.
\end{eqnarray}
Furthermore, we introduce new variables $\mu$ and $\eta$ (instead of $\xi$ and $\xi_1$) by
\begin{equation}\label{eq:23}
\begin{array}{ccc}
\displaystyle
\mu\equiv y-\be_rx+\xi(\be_r-\be_p)+\xi_1(\be_p-\be_m)\\
\displaystyle
\eta\equiv t-\al_rx+\xi(\al_r-\al_p)+\xi_1(\al_p-\al_m).
\end{array}
\end{equation}
Owing to (\ref{eq:blockB}), the integers $r$, $p$, and $m$ belong to three different sets
$\{1,\dots,l\}$, $\{l+1,\dots,k\}$, and $\{k+1,\dots,n\}$. On the account of
(\ref{eq:ab}), the mapping (\ref{eq:23}) is therefore non-generate.
Apply the mapping  (\ref{eq:23}) to the plane $(\xi,\xi_1)$ and let $\Delta_1$
denote the image of the triangle with vertices  $(\xi_2,\xi_2)$, $(\xi_2,1)$, $(x,1)$.
Similarly, let $\Delta_2$
denote the image of the triangle with vertices $(0,0)$, $(0,1)$, $(x,1)$
and  $\Pi$  be the  image of  the quadrangle   with the 
vertices $(0,\xi_2)$, $(0,1)$, $(\xi_2,1)$, $(\xi_2,\xi_2)$.
By (\ref{eq:19}) and (\ref{eq:23}), the summand (\ref{eq:*}) transforms to
\begin{eqnarray}
\label{eq:**}
\lefteqn{
\int\limits_{0}^xd\xi_2\int\!\!\int_{\Delta_1}F(x,y,t,\xi_2,\mu,\eta)
f_m(\xi_2,\mu,\eta)\,\d\mu\,d\eta}
\nonumber\\
&&
+\int\limits_{0}^xd\xi_2\int\!\!\int_{\Pi}F(x,y,t,\xi_2,\mu,\eta)
f_m(\xi_2,\mu,\eta)\,\d\mu\,d\eta\\
&&
+\int\limits_{x}^1d\xi_2\int\!\!\int_{\Delta_2}F(x,y,t,\xi_2,\mu,\eta)
f_m(\xi_2,\mu,\eta)\,\d\mu\,d\eta,
\nonumber
\end{eqnarray}
where $F$ is a certain function
of $b_{ij}, b_{rq}$, and $b_{ps}$. By assumption (\ref{eq:reg}), $F$ is continuously differentiable in $x,y,t$. 
Our task is therefore reduced to obtaining the estimate (\ref{eq:18}) with $\tilde u$ 
replaced by (\ref{eq:**}). The latter is a straightforward consequence of the fact that the 
lines bounding $\triangle_1$, $\triangle_2$, and $\Pi$ depend linearly on $x,y,t$
(due to the linearity of (\ref{eq:23}) in $x,y,t$). 
The proof is complete.

\begin{rem}\label{rem:reg}
Note that in the course of proving Theorem~\ref{thm:fredh}, we applied 
Theorem~\ref{thm:criter} with $n=3$. 
This choice is essential: a simple analysis of our argument shows that $n=2$ 
would not work. This contrasts to the one-dimensional hyperbolic case where 
$n=2$ makes the job (see \cite{KmRe1,KmRe2}).
In general,
the structure of the regularizer of the problem depends on the number 
of independent variables: 
 for $m$-dimensional hyperbolic PDEs of kind (\ref{eq:10})
we establish the Fredholm property if we regularize the problem by means of the right regularizer 
$\sum\limits_{i=0}^{m}(DC^{-1})^iC$ 
and apply  Theorem~\ref{thm:criter} with $n=m+1$. 
 \end{rem}

\subsection*{Acknowledgments}

I would like to thank Lutz Recke for many useful discussions.


\begin{thebibliography}{10}



\bibitem{ChowHale} S.-N. Chow, J. K. Hale, {\it Methods of Bifurcation Theory},
Grundlehren der Math. Wissenschaften {\bf 251},
Springer-Verlag, New York-Berlin, 1982.

\bibitem{Ki} H. Kielh\"ofer,  {\it Bifurcation Theory. An Introduction with Applications
to PDEs},
Appl. Math. Sciences {\bf 156}, Springer-Verlag, New York-Berlin, 2004.


                                                          
\bibitem{Km} I. Kmit, Smoothing solutions to initial-boundary problems  for first-order hyperbolic systems, 2010
{\it http://arxiv.org/pdf/0908.2189v3}

\bibitem{KmRe1}I.~Kmit and  L.~Recke,
Fredholm Alternative for periodic-Dirichlet problems for
linear hyperbolic systems.
 {\it J.  Math. Anal. and Appl.} {\bf 335} (2007),  355--370.

\bibitem{KmRe2} I. Kmit and L. Recke, Fredholmness and smooth dependence for linear hyperbolic periodic-Dirichlet problems, 2010
{\it http://arxiv.org/pdf/1005.0689v3}

\bibitem{Schechter} M. Schechter  {\it Principles of Functional Analysis}, second ed.,
Graduate Studies in Math. {\bf 36},
American Mathematical Society, Providence, Rhode Island, 2002.


\bibitem{Zeidler}  E. Zeidler, {\it Applied Functional Analysis. Main Principles and their Applications},
Applied Math. Sciences {\bf 109}, Springer-Verlag, Berlin, 1995.



\end{thebibliography}
\end{document}